\newtheorem*{maintheorem*}{Main Theorem}
\newtheorem{theorem}{Theorem}[section]
\newtheorem{prop}[theorem]{Proposition}
\newtheorem{question}[theorem]{Question}
\newtheorem{lemma}[theorem]{Lemma}
\newtheorem{cor}[theorem]{Corollary}
\theoremstyle{definition}
\newtheorem{example}[theorem]{Example}
\numberwithin{equation}{section}
\newcommand{\nn}{\mathbb{N}}
\newcommand{\pp}{\mathbb{P}}
\newcommand{\qq}{\mathbb{Q}}
\newcommand{\zz}{\mathbb{Z}}
\newcommand{\ii}{\mathcal{A}}
\newcommand{\uu}{\mathcal{U}}
\providecommand\ldb{\llbracket}
\providecommand\rdb{\rrbracket}
\keywords{non-unique factorizations, Puiseux monoid, atomicity, Betti graph, Betti element, atomization}
\subjclass[2010]{Primary: 13F15, 13A05; Secondary: 20M13, 13F05}
\begin{document}
	
	\mbox{}
	\title{Betti graphs and atomization of Puiseux monoids}
	
	\author{Scott Chapman}
	\address{Department of Mathematics and Statistics\\Sam Houston State University\\Huntsville, TX 77341}
	\email{scott.chapman@shsu.edu}
	
	\author{Joshua Jang}
	\address{Oxford Academy\\Cypress, CA 90630}
	\email{joshdream01@gmail.com}

	\author{Jason Mao}
	\address{Morris Hills School\\Rockaway, NJ 07866}
	\email{jmao142857@gmail.com}

	\author{Skyler Mao}
	\address{Saratoga School\\Saratoga, CA 95070}
	\email{skylermao@gmail.com}

\date{\today}

\begin{abstract}
	 Let $M$ be a Puiseux monoid, that is, a monoid consisting of nonnegative rationals (under addition). A nonzero element of $M$ is called an atom if its only decomposition as a sum of two elements in $M$ is the trivial decomposition (i.e., one of the summands is $0$), while a nonzero element $b \in M$ is called atomic if it can be expressed as a sum of finitely many atoms allowing repetitions: this formal sum of atoms is called an (additive) factorization of $b$. The monoid $M$ is called atomic if every nonzero element of $M$ is atomic. In this paper, we study factorizations in atomic Puiseux monoids through the lens of their associated Betti graphs. The Betti graph of $b \in M$ is the graph whose vertices are the factorizations of $b$ with edges between factorizations that share at least one atom. Betti graphs have been useful in the literature to understand several factorization invariants in the more general class of atomic monoids. 
\end{abstract}
\medskip

\maketitle


\bigskip
\section{Introduction}
\label{sec:intro}

Let $M$ be an (additive) monoid that is cancellative and commutative. We say that a non-invertible element of $M$ is an atom if it cannot be written in $M$ as a sum of two non-invertible elements, and we say that $M$ is atomic if every non-invertible element of $M$ can be written as a sum of finitely many atoms (allowing repetitions). A formal sum of atoms which add up to $b \in M$ is called a factorization $b$, while the number of atoms in a factorization $z$ (counting repetitions) is called the length of $z$. Assume now that $M$ is an atomic monoid. If $b$ is a non-invertible element of $M$, then the Betti graph of $b$ is the graph whose elements are the factorizations of $b$ and whose set of edges consists of all pairs of factorizations having at least one atom in common. A non-invertible element of $M$ is called a Betti element if its Betti graph is disconnected. For a more general notion of a Betti element, namely, the \emph{syzygies} of an $\nn^k$-graded module, see~\cite{MS04}. Following~\cite{fG17}, we say that an additive submonoid of $\qq$ is a Puiseux monoid if it consists of nonnegative rationals. Factorizations in the setting of Puiseux monoids have been actively investigated in the past few years (see the recent papers~\cite{CGG21,GGT21} and the references therein). The primary purpose of this paper is to further understand factorizations in Puiseux monoids, now through the lens of Betti graphs.
\smallskip

Betti graphs are relevant in the theory of non-unique factorization because various of the most relevant factorization and length-factorization (global) invariants are either attained at Betti elements or can be computed using Betti elements. For instance, Chapman et al.~\cite{CGLPR06} proved that the catenary degree of every finitely generated reduced monoid is attained at a Betti element. In addition, Chapman et al.~\cite{CGLMS12} used Betti elements to describe the delta set of atomic monoids satisfying the bounded factorization property (the catenary degree and the delta set are two of the most relevant factorization invariants). Betti elements have been significantly studied during the last two decades. For instance, they have been studied by Garc\'ia-S\'anchez and Ojeda~\cite{GO10} in connection to uniquely-presented numerical semigroups. In addition, Garc\'ia-S\'anchez et al.~\cite{GOR13} characterized affine semigroups having exactly one Betti element, and for those semigroups they explicitly found various factorization invariants, including the catenary degree and the delta set. In the same direction, Chapman et al.~\cite{CCGS21} recently proved that every length-factorial monoid that is not a unique factorization monoid has a unique Betti element. Even more recently, the sets of Betti elements of additive monoids of the form $(\nn_0[\alpha],+)$ for certain positive algebraic numbers $\alpha$ have been explicitly computed by Ajran et al.~\cite{ABLST23}.
\smallskip

This paper is organized as follows. In Section~\ref{sec:background}, we discuss most of the terminology and non-standard results needed to follow the subsequent sections of content. In Section~\ref{sec:examples}, we provide some motivating examples and perform explicit computations of the sets of Betti elements of some Puiseux monoids. The discussed examples should provide certain intuition to better understand our main results. In Section~\ref{sec:atomization}, which is the section containing the main results of this paper, we discuss the notion of atomization, which is a method introduced by Gotti and Li in~\cite{GL23} that we can use to construct atomic Puiseux monoids with certain desired factorization properties. Indeed, most of the Puiseux monoids with applications in commutative ring theory can be constructed using atomization (see~\cite{aG74} and \cite{GL23}). As the main result of this paper, we describe the set of Betti elements of Puiseux monoids constructed by atomization, and we completely determine the sets of Betti elements for certain special types of atomized Puiseux monoids. Finally, we provide the following application of our main result: for any possible size $s$, there exists an atomic Puiseux monoid having precisely $s$ Betti elements.

\bigskip
\section{Background}
\label{sec:background}

\medskip
\subsection{General Notation and Terminology} 

Through this paper, we let $\nn$ denote the set of positive integers, and we set $\nn_0 := \nn \cup \{0\}$. In addition, we let $\pp$ stand for the set of primes. As it customary, we let $\zz$ and $\qq$ denote the set of integers and the set of rationals, respectively. If $b,c \in \zz$, then we let $\ldb b,c \rdb$ denote the discrete closed interval from $b$ to $c$; that is, $\ldb b,c \rdb := \{n \in \zz \mid b \le n \le c\}$ (observe that $\ldb b,c \rdb$ is empty if $b > c$). For a subset $X$ consisting of rationals and $q \in \qq$, we set
\[
	X_{\ge q} := \{x \in X \mid x \ge q\},
\]
and we define $X_{> q}$ in a similar manner. For $q \in \qq_{> 0}$, we let $\mathsf{n}(q)$ and $\mathsf{d}(q)$ denote the unique elements of $\nn$ satisfying that $\gcd(\mathsf{n}(q), \mathsf{d}(q)) = 1$ and $q = \mathsf{n}(q)/\mathsf{d}(q)$. For $p \in \pp$ and $n \in \nn$, the value $v_p(n)$ is the exponent of the largest power of $p$ dividing~$n$. Moreover, the $p$-\emph{adic valuation} is the map $v_p \colon \qq_{\ge 0} \to \zz$ defined by $v_p(q) = v_p(\mathsf{n}(q)) - v_p(\mathsf{d}(q))$ for $q \in \mathbb{Q}_{> 0}$ and $v_p(0) = \infty$. One can verify that the $p$-adic valuation satisfies the inequality $v_p(q_1 + \dots + q_n) \ge \min\{v_p(q_1), \dots, v_p(q_n) \}$ for every $n \in \nn$ and $q_1, \dots, q_n \in \qq_{> 0}$.

\medskip
\subsection{Monoids}

Throughout this paper, we tacitly assume that the term \emph{monoid} refers to a cancellative and commutative semigroup with an identity element. Unless we specify otherwise, monoids in this paper will be additively written. Let $M$ be a monoid. We let $M^\bullet$ denote the set $M \! \setminus \! \{0\}$. The group of invertible elements of $M$ is denoted by $\uu(M)$. Most of the monoids we consider in the scope of this paper have trivial groups of invertible elements.  A subset $M'$ of $M$ is called a \emph{submonoid} if $M'$ contains~$0$ and is closed under addition. A subset $S$ of $M$ is called a \emph{generating set} if the only submonoid of $M$ containing $S$ is $M$ itself, in which case we write $M = \langle S \rangle$. The monoid $M$ is called \emph{finitely generated} if it has a finite generating set; otherwise, $M$ is said to be \emph{non-finitely generated}. For $b,c \in M$, we say that $b$ \emph{divides} $c$ in $M$ and write $b \mid_M c$ if there exists $b' \in M$ such that $c = b + b'$. The monoid $M$ is called a \emph{valuation monoid} if for any pair of elements $b,c \in M$ either $b \mid_M c$ or $c \mid_M b$.

An non-invertible element $a \in M$ is called an \emph{atom} provided that for all $u,v \in M$ the fact that $a = u + v$ implies that $u \in \uu(M)$ or $v \in \uu(M)$. The set consisting of all the atoms of $M$ is denoted by $\mathcal{A}(M)$. Following Coykendall, Dobbs, and Mullins~\cite{CDM99}, we say that $M$ is antimatter if $\mathcal{A}(M)$ is empty. An element $b \in M$ is called \emph{atomic} if either $b$ is invertible or $b$ can be written as a sum of atoms (with repetitions allowed), while the whole monoid $M$ is called \emph{atomic} if every element of $M$ is atomic. A subset $I$ of $M$ is called an \emph{ideal} if
\[
	I + M := \{b + m \mid b \in I \text{ and } m \in M \} \subseteq I.
\]
In addition, an ideal $I$ of $M$ is said to be \emph{principal} if there exists an element $b \in M$ such that the following equality holds:
\[
	I = b + M := \{b + c \mid c \in M\}.
\]
A sequence of ideals $(I_n)_{n \ge 1}$ is called \emph{ascending} if $I_n \subseteq I_{n+1}$ for every $n \in \nn$ and is said \emph{to stabilize} if there exists $N \in \nn$ such that $I_n = I_N$ for every $n \in \nn$ with $n \ge N$. The monoid $M$ satisfies the \emph{ascending chain condition on principal ideals} (ACCP) if every ascending chain of principal ideals of $M$ stabilizes. It is well known that every monoid satisfying the ACCP is atomic \cite[Proposition~1.1.4]{GH06}. The converse does not hold, and we will discuss examples in the next sections.

\medskip
\subsection{Factorizations} 

Let $M$ be a monoid. The set $M_{\text{red}} := \{b + \uu(M) \mid b \in M\}$ is also a monoid under the natural addition induced by that of $M$ (one can verify that $M_{\text{red}}$ is atomic if and only if $M$ is atomic). We let $\mathsf{Z}(M)$ denote the free commutative monoid on the set $\mathcal{A}(M_{\text{red}})$, that is, the monoid consisting of all formal sums of atoms in $\mathcal{A}(M_{\text{red}})$. The monoid $\mathsf{Z}(M)$ plays an important role in this paper, and the formal sums in $\mathsf{Z}(M)$ are called \emph{factorizations}. The \emph{greatest common divisor} of two factorizations $z$ and $z'$ in $\mathsf{Z}(M)$, denoted by $\text{gcd}(z,z')$, is the factorization consisting of all the atoms $z$ and $z'$ have in common (counting repetitions). If a factorization $z \in \mathsf{Z}(M)$ consists of $\ell$ atoms of $M_{\text{red}}$ (counting repetitions), then we call $\ell$ the \emph{length} of $z$, in which case we often write $|z|$ as an alternative for $\ell$. We say that $a \in \ii(M)$ \emph{appears} in $z$ provided that $a + \uu(M)$ is one of the formal summands of $z$. 

There is a  unique monoid homomorphism $\pi_M \colon \mathsf{Z}(M) \to M_{\text{red}}$ satisfying $\pi(a) = a$ for all $a \in \ii(M_{\text{red}})$, which is called the \emph{factorization homomorphism} of $M$. 
When there seems to be no risk of ambiguity, we write $\pi$ instead of $\pi_M$. The set
\[
	\ker \pi := \{(z,z') \in \mathsf{Z}(M)^2 \mid \pi(z) = \pi(z') \}
\]
is called the \emph{kernel} of~$\pi$, and it is a congruence in the sense that it is an equivalence relation on $\mathsf{Z}(M)$ satisfying that if $(z,z') \in \ker \pi$, then $(z+w, z'+w) \in \ker \pi$ for all $w \in \mathsf{Z}(M)$. An element $(z,z') \in \ker \pi$ is called a \emph{factorization relation}. For each $x \in M$, we set
\[
	\mathsf{Z}(b) := \mathsf{Z}_M(b) := \pi^{-1}(b + \uu(M)) \subseteq \mathsf{Z}(M),
\]
and we call $\mathsf{Z}(b)$ the \emph{set of factorizations} of $b$. Observe that $\mathsf{Z}(u) = \{0\}$ if and only if $u \in \uu(M)$. If $|\mathsf{Z}(b)| = 1$ for every $b \in M$, then $M$ is called a \emph{unique factorization monoid} (UFM). For each $b \in M$, we set
\[
	\mathsf{L}(b) := \mathsf{L}_M(b) := \{|z| : z \in \mathsf{Z}(b)\} \subset \nn_0,
\]
and we call $\mathsf{L}(b)$ the \emph{set of lengths} of $b$. If $|\mathsf{L}(b)| = 1$ for every $b \in M$, then $M$ is called a \emph{half-factorial monoid} (HFM). Note that every UFM is an HFM (see \cite{sC14} for examples of HFMs that are not UFMs). Moreover, if $1 \le |\mathsf{L}(b)| < \infty$ for every $b \in M$, then $M$ is called a \emph{bounded factorization monoid} (BFM). It follows directly from the definitions that every HFM is a BFM. Cofinite submonoids of $(\nn_0,+)$ are called \emph{numerical monoids}, and every numerical monoid different from $\nn_0$ is a BFM that is not an HFM. In addition, it is well known that every BFM satisfies the ACCP \cite[Corollary~1]{fHK92}. The converse does not hold, and we will see examples illustrating this observation in coming sections. For a recent survey on factorizations on commutative monoids, see~\cite{GZ20}.

\subsection{Betti Elements and Betti Graphs} A finite sequence $z_0, \dots, z_k$ of factorizations in $\mathsf{Z}(M)$ is called a \emph{chain of factorizations} from $z_0$ to $z_k$ provided that $\pi(z_0) = \pi(z_1) = \dots = \pi(z_k)$ (here $\pi$ is the factorization homomorphism of $M$). Let $\mathcal{R}$ be the subset of $\mathsf{Z}(M) \times \mathsf{Z}(M)$ consisting of all pairs $(z,z')$ such that there exists a chain of factorizations $z_0, \dots, z_k$ from $z$ to $z'$ with $\text{gcd}(z_{i-1}, z_i) \neq 0$ for every $i \in \ldb 1,k \rdb$. It follows immediately that $\mathcal{R}$ is an equivalence relation on $\mathsf{Z}(M)$ that refines $\ker \pi$. Fix $b \in M$. We let $\mathcal{R}_b$ denote the set of equivalence classes of $\mathcal{R}$ inside $\mathsf{Z}(b)$, and the element $b$ is called a \emph{Betti element} provided that $|\mathcal{R}_b| \ge 2$.  The \emph{Betti graph} $\nabla_b$ of $b$ is the graph whose set of vertices is $\mathsf{Z}(b)$ having an edge between factorizations $z,z' \in \mathsf{Z}(x)$ precisely when $\gcd(z,z') \neq 0$. Observe that an element of $M$ is a Betti element if and only if its Betti graph is disconnected. We let $\text{Betti}(M)$ denote the set of Betti elements of $M$. 

\begin{example}
	Consider the numerical monoid $N := \langle 5,7,17,23 \rangle$. Using the SAGE package called \texttt{numericalsgps GAP}, we obtain that $|\text{Betti}(N)| = 3$: the Betti elements of $N$ are $28$, $30$, and $46$. Figure~\ref{fig:factorization graphs} shows the Betti graphs of both $40$ and $46$.
	\begin{figure}[h]
		\centering
		\includegraphics[width = 12cm]{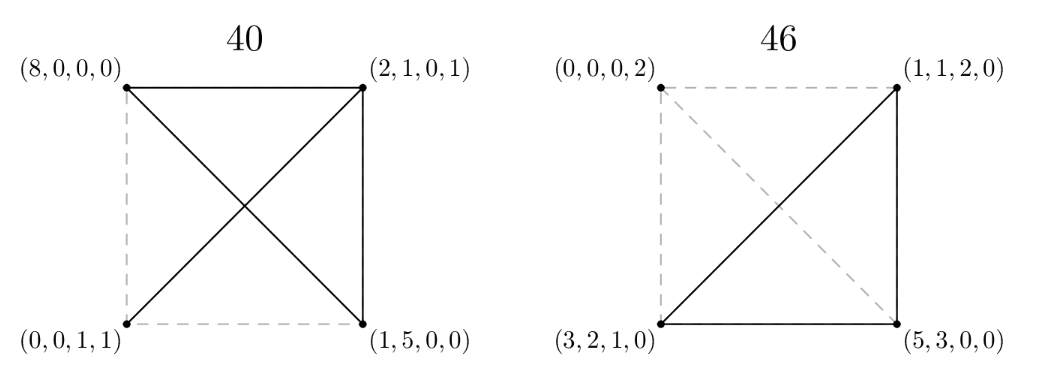}
		\caption{For $N := \langle 5,7,17,23 \rangle$, the figure shows the Betti graph of $40 \notin \text{Betti}(N)$ on the left and the Betti graph of $46 \in \text{Betti}(N)$ on the right.}
		\label{fig:factorization graphs}
	\end{figure}
\end{example}

It is clear that if a monoid is a UFM, then its set of Betti elements is empty. Following Coykendall and Zafrullah~\cite{CZ04} we say that a monoid $M$ is an \emph{unrestricted unique factorization monoid} (U-UFM) if every element of $M$ has at most one factorization. It follows directly from the definitions that every UFM is a U-UFM. We conclude this subsection characterizing U-UFMs in terms of the existence of Betti elements.

\begin{prop} \label{prop:U-UFM characterization}
	A monoid is a U-UFM if and only if its set of Betti elements is empty.
\end{prop}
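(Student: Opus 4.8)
The plan is to prove the two implications separately; the forward direction is essentially immediate, while the reverse direction carries all of the content.

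For the forward direction, suppose $M$ is a U-UFM. Then $|\mathsf{Z}(b)| \le 1$ for every $b \in M$, so for each $b$ the Betti graph $\nabla_b$ has at most one vertex. A graph with at most one vertex is connected (or empty), so $|\mathcal{R}_b| \le 1 < 2$; hence no element of $M$ is a Betti element and $\mathrm{Betti}(M) = \emptyset$. No contrapositive is needed here.

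For the reverse implication, I would assume $\mathrm{Betti}(M) = \emptyset$ and prove directly that every $b \in M$ satisfies $|\mathsf{Z}(b)| \le 1$. The engine is strong induction on factorization length: I would establish the statement $(\star_n)$ asserting that, for every $b \in M$, if $\mathsf{Z}(b)$ contains a factorization of length $n$, then $\mathsf{Z}(b)$ is a singleton. The base case $n = 0$ uses the fact recorded in the excerpt that $\mathsf{Z}(u) = \{0\}$ exactly when $u \in \uu(M)$, so the only elements admitting the empty factorization are invertible and have a unique factorization. For the inductive step, fix $b$ with a length-$n$ factorization $z$ and let $z' \in \mathsf{Z}(b)$ be arbitrary. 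Since $b$ is not a Betti element, $\nabla_b$ is connected, so there is a chain $z = z_0, z_1, \dots, z_k = z'$ with $\gcd(z_{i-1}, z_i) \neq 0$ for each $i$. I would then propagate equality along the chain: each consecutive pair $z_{i-1}, z_i$ shares an atom $a$, so writing $z_{i-1} = a + u$ and $z_i = a + u'$ produces two factorizations $u, u'$ of the single element $\pi(u) = \pi(u')$; because $z_{i-1}$ equals $z$ and hence has length $n$, the factorization $u$ has length $n-1$, so $(\star_{n-1})$ forces $\mathsf{Z}(\pi(u))$ to be a singleton, giving $u = u'$ and therefore $z_{i-1} = z_i$. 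Iterating yields $z = z'$, so $\mathsf{Z}(b) = \{z\}$, and ranging over all $n$ shows $M$ is a U-UFM.

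The main obstacle is entirely in the reverse direction, and specifically in the fact that two distinct factorizations of $b$ need not share an atom directly: the hypothesis $\mathrm{Betti}(M) = \emptyset$ only guarantees that they are joined by a chain whose consecutive terms share atoms. The crucial maneuver is thus to cancel a shared atom between consecutive chain factorizations, dropping the length by one so that the inductive hypothesis applies, and the key observation making the iteration work is that each equality $z_{i-1} = z_i$ keeps the current factorization at length $n$, feeding the next cancellation. The two points I would verify carefully are the bookkeeping in $M_{\mathrm{red}}$ (so that ``sharing an atom'' and ``$\gcd \neq 0$'' genuinely coincide) and the base case via the stated characterization of $\mathsf{Z}(u)$ for invertible $u$.
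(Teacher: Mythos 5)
Your argument is correct and rests on the same mechanism as the paper's proof: connectedness of the Betti graph supplies a common atom between adjacent factorizations in a chain, cancelling that atom drops the length by one, and well-foundedness of length closes the argument. The only difference is packaging --- you run a direct strong induction on factorization length and propagate equality along the entire chain, whereas the paper argues by contradiction via an infinite descent of lengths using a single neighbor at each stage --- so the two proofs are essentially the same.
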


\begin{proof}
	The direct implication follows immediately because if a monoid is a U-UFM then the Betti graph of each element has at most one vertex and is, therefore, connected.
	\smallskip
	
	For the reverse implication, assume that $M$ is a monoid containing no Betti elements. Now suppose, by way of contradiction, that $M$ is not a U-UFM. This means that there exists an element $x_0 \in M$ such that $|\mathsf{Z}(x_0)| \ge 2$. Let $z_0$ and $z'_0$ be two distinct factorizations of $x_0$. After dropping the common atoms of $z_0$ and $z'_0$ and subtracting the sum of such atoms from $x_0$, we can assume that $\gcd(z_0, z'_0)$ is the empty factorization. Since $x_0$ is not a Betti element, $z_0$ and $z'_0$ must be connected in $\nabla_{x_0}$, and so there exists a factorization $w_0$ of $x_0$ with $w_0 \neq z_0$ such that $\gcd(z_0, w_0)$ is nonempty. Now set $z_1 := z_0 - \gcd(z_0, w_0)$. Note that $z_1$ is a sub-factorization of $z_0$ satisfying  $|z_0| > |z_1|$ (because $\gcd(z_0, w_0)$ is nonempty). Take $x_1 \in M$ such that $z_1$ is a factorization of $x_1$, and observe that $x_1$ has at least two factorizations, namely, $z_1$ and $w_0 - \gcd(z_0, w_0)$. Because $x_1$ is not a Betti element, there must be a factorization $w_1$ of $x_1$ with $w_1 \neq z_1$ such that $\gcd(z_1, w_1)$ is nonempty. Now set $z_2 := z_1 - \gcd(z_1, w_1)$. Note that $z_2$ is a sub-factorization of $z_1$ satisfying  $|z_1| > |z_2|$ (because $\gcd(z_1, w_1)$ is nonempty). Proceeding in this fashion we can find a sequence $(z_n)_{n \ge 0}$ of factorizations in $M$ such that $|z_n| > |z_{n+1}|$ for every $n \in \nn_0$. However, this contradicts the well ordering principle. Hence $M$ must be a U-UFM.
\end{proof}

\bigskip
\section{Motivating Examples}
\label{sec:examples}

Our main purpose in this section is to discuss some examples that will serve as a motivation to establish our main results in the next section. The examples exhibited in this section will shed some light upon the potential sizes of the sets of Betti elements of Puiseux monoids.
\smallskip

It is clear and a special case of Proposition~\ref{prop:U-UFM characterization} that if a monoid is a UFM, then its set of Betti elements is empty. It is well known that a Puiseux monoid is a UFM if and only if it is an HFM. This occurs if and only if it can be generated by one element, in which case it is isomorphic to $\nn_0$ (see \cite[Proposition~4.3]{fG20}). There are, on the other hand, non-HFM atomic Puiseux monoids that contain finitely many Betti elements. The next two examples illustrate this observation.

\begin{example} \label{ex:Betti elements of f.g. PMs}
	Let $M$ be a finitely generated Puiseux monoid. If $M \neq \langle q \rangle$ for any element $q \in \qq_{>0}$, then it follows from \cite[Remark~2]{GO10} that $M$ contains at least a Betti element. Since $M$ is finitely generated, it must be isomorphic to a numerical monoid and, therefore,~$M$ has finitely many Betti elements (see \cite[Section~9.3]{GR09}). Thus, every finitely generated Puiseux monoid that is not generated by a single rational has a nonempty finite set of Betti elements.
\end{example}

It was proved in \cite[Proposition~3.5]{CCGS21} that if a monoid is an LFM that is not a UFM, then it contains exactly one Betti element, and it follows directly from~\cite[Proposition~5.7]{CCGS21} that a Puiseux monoid is an LFM if and only if it can be generated by two elements. However, there are non-finitely generated atomic Puiseux monoids with exactly one Betti element. This is illustrated in the following example.

\begin{example} \label{ex:Betti elements of the reciprocal PM}
	Consider the Puiseux monoid $M := \big\langle \frac1p \mid p \in \pp \big\rangle$. It is well known that~$M$ is atomic with $\mathcal{A}(M) = \big\{ \frac1p \mid p \in \pp \big\}$.  It follows from \cite[Example~3.3]{AG22} (see \cite[Proposition~4.2(2)]{fG22} for more details) that every element $q \in M$ can be written uniquely as
	\[
		q = c + \sum_{p \in \pp} c_p \frac1p,
	\] 
	where $c \in \nn_0$ and $c_p \in \ldb 0, p-1 \rdb$ for every $p \in \pp$ (here all but finitely many of the coefficients $c_p$'s are zero). From this, we can infer that for any element $q \in M$, the conditions $|\mathsf{Z}(q)| = 1$ and $1 \nmid_M q$ are equivalent. We claim that $\text{Betti}(M) = \{1\}$. To argue this equality, fix $q \in M^\bullet$. If $1 \nmid_M q$, then $|\mathsf{Z}(q)| = 1$ and so $\nabla_q$ is trivially connected, whence $q$ is not a Betti element. Assume, on the other hand, that $1 \mid_M q$ and, therefore, that $|\mathsf{Z}(q)| \ge 2$. Suppose first that $q \neq 1$. Because $M$ is atomic, we can write $q = 1 + \sum_{i=1}^k a_i$ for some $k \in \nn$ and $a_1, \dots, a_k \in \mathcal{A}(M)$. Observe that any two factorizations in $\mathsf{Z}(q)$ of the form $p \frac1p + a_1 + \dots + a_k$ with $p \in \pp$ are connected in the graph $\nabla_q$. In addition, any other factorization in $\mathsf{Z}(q)$ contains an atom $\frac{1}{p_0}$ for some $p_0 \in \pp$, so this factorization must be connected in $\nabla_q$ to the factorization $p_0 \frac1{p_0} + a_1 + \dots + a_k$. Hence $\nabla_q$ is connected when $1 \mid_ M q$ and $q \neq 1$, and so $q$ is not a Betti element in this case. Finally, we see that $q = 1$ is a Betti element: indeed, in this case, $\mathsf{Z}(1) = \big\{ p \frac1p \mid p \in \pp \}$, so the Betti graph of $1$ contains no edges. Hence $\text{Betti}(M) = \{1\}$.
\end{example}

The Puiseux monoids in the examples we have discussed so far have finitely many Betti elements. However, there exist atomic Puiseux monoids having infinitely many Betti elements. We provide an example showing this in the next section (Example~\ref{ex:Grams' monoid}). First, we need to discuss the notion of atomization.

\bigskip
\section{Atomization and Betti Elements}
\label{sec:atomization}

It turns out that we can construct Puiseux monoids with any prescribed number of Betti elements. Before doing so, we need to introduce the notion of atomization, which is a useful technique to construct Puiseux monoids satisfying certain desired properties. Let $(q_n)_{n \ge 1}$ be a sequence consisting of positive rationals, and let $(p_n)_{n \ge 1}$ be a sequence of pairwise distinct primes such that $\gcd(p_i, \mathsf{n}(q_i)) = \gcd(p_i, \mathsf{d}(q_j)) = 1$ for all $i,j \in \nn$. Following Gotti and Li~\cite{GL23}, we say that
\[
	M := \Big\langle \frac{q_n}{p_n} \ \Big{|} \ n \in \nn \Big\rangle
\]
is the \emph{Puiseux monoid} of $(q_n)_{n \ge 1}$ \emph{atomized} at $(p_n)_{n \ge 1}$. It is not hard to argue that $M$ is atomic with $\mathcal{A}(M) = \big\{ \frac{q_n}{p_n} \mid n \in \nn \big\}$ (see \cite[Proposition~3.1]{GL23} for the details). It turns out that we can determine the Betti elements of certain Puiseux monoids obtained by atomization. We will get into this matter in Theorem~\ref{thm:atomization and Betti elements}. First, we need the following technical lemma.

\begin{lemma} \label{lem:canonical decomposition of atomized PMs}
	Let $(q_n)_{n \ge 1}$ be a sequence consisting of positive rational numbers, and let $(p_n)_{n \ge 1}$ be a sequence of prime numbers whose terms are pairwise distinct such that $\gcd(p_i, \mathsf{n}(q_i)) = \gcd(p_i, \mathsf{d}(q_j)) = 1$ for all $i,j \in \nn$. Let $M$ be the Puiseux monoid of $(q_n)_{n \ge 1}$ atomized at $(p_n)_{n \ge 1}$. Then every element $q \in M$ can be uniquely written as follows:
	\begin{equation} \label{eq:canonical decomposition}
		q = n_q + \sum_{n \in \nn} c_n \frac{q_n}{p_n},
	\end{equation}
	where $n_q \in \langle q_n \mid n \in \nn \rangle$ and $c_n \in \ldb 0, p_n - 1 \rdb$ for every $n \in \nn$ (here $c_n = 0$ for all but finitely many $n \in \nn$).
\end{lemma}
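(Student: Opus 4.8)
The plan is to split the statement into existence and uniqueness, proving existence by a coordinatewise division-with-remainder on the atom multiplicities and uniqueness by a $p$-adic valuation argument carried out one prime at a time.

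For existence I would start from the fact, recalled just above the lemma, that $\mathcal{A}(M) = \big\{ \frac{q_n}{p_n} \mid n \in \nn \big\}$, so any $q \in M$ can be written as $q = \sum_{n \in \nn} a_n \frac{q_n}{p_n}$ with $a_n \in \nn_0$ and all but finitely many $a_n$ equal to $0$. For each $n$ I would divide $a_n$ by $p_n$, writing $a_n = k_n p_n + c_n$ with $k_n \in \nn_0$ and $c_n \in \ldb 0, p_n - 1 \rdb$, so that $a_n \frac{q_n}{p_n} = k_n q_n + c_n \frac{q_n}{p_n}$. Setting $n_q := \sum_{n} k_n q_n \in \langle q_n \mid n \in \nn \rangle$ then produces the representation~\eqref{eq:canonical decomposition}, with only finitely many $c_n$ nonzero.

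For uniqueness the key preliminary computation is that $v_{p_m}(q_m) = 0$ for every $m$: indeed $\gcd(p_m, \mathsf{n}(q_m)) = 1$ forces $v_{p_m}(\mathsf{n}(q_m)) = 0$ and $\gcd(p_m, \mathsf{d}(q_m)) = 1$ forces $v_{p_m}(\mathsf{d}(q_m)) = 0$; while for $n \neq m$ the hypothesis $\gcd(p_m, \mathsf{d}(q_n)) = 1$ together with the distinctness of the primes gives $v_{p_m}\big(\frac{q_n}{p_n}\big) = v_{p_m}(q_n) = v_{p_m}(\mathsf{n}(q_n)) \geq 0$. In particular every generator $q_n$ has $v_{p_m}(q_n) \geq 0$, so every element of $\langle q_n \mid n \in \nn \rangle$, and hence every difference of two such elements, has nonnegative $p_m$-adic valuation. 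Now, given two representations as in~\eqref{eq:canonical decomposition}, I would set $d_n := c_n - c_n' \in \ldb -(p_n-1), p_n - 1 \rdb$ and rearrange to
\[
	n_q' - n_q = \sum_{n \in \nn} d_n \frac{q_n}{p_n}.
\]
Fixing $m$ and assuming $d_m \neq 0$, the bound $0 < |d_m| < p_m$ gives $v_{p_m}(d_m) = 0$, so the $n = m$ summand on the right has valuation exactly $-1$, whereas every summand with $n \neq m$ has valuation $\geq 0$; since the minimum is attained by a single index, the right-hand side has $v_{p_m} = -1$. This contradicts the nonnegativity of $v_{p_m}(n_q' - n_q)$, so $d_m = 0$ for all $m$, whence $c_n = c_n'$ for every $n$ and then $n_q = n_q'$.

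The only delicate part is the uniqueness half, and within it the valuation bookkeeping: one must use $\gcd(p_i, \mathsf{n}(q_i)) = 1$ to clear the numerator at the diagonal term, $\gcd(p_i, \mathsf{d}(q_j)) = 1$ \emph{for all pairs} to control the denominators of the cross terms, and the distinctness of the $p_n$ so that $v_{p_m}(p_n) = 0$ for $n \neq m$. The decisive final step relies on the standard strengthening of the inequality $v_p(x_1 + \dots + x_k) \geq \min_i v_p(x_i)$ recalled in the background, namely that equality holds when the minimum is attained at a single index, applied to the valuation extended to all of $\qq$ so that the negative coefficients $d_n$ are permitted.
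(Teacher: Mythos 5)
Your proof is correct and follows essentially the same route as the paper: the uniqueness argument via $p_n$-adic valuations (using $v_{p_n}(q_n)=0$, the nonnegativity of $v_{p_n}$ on the cross terms and on $\langle q_n \mid n \in \nn\rangle$, and the bound $|c_n - c_n'| < p_n$) is exactly the one the paper gives. Your existence step, a coordinatewise division with remainder $a_n = k_n p_n + c_n$, is just a direct, constructive version of the paper's choice of a representation minimizing $\sum_n c_n$; there is no gap.
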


\begin{proof}
	It suffices to prove the existence and uniqueness of the decomposition in~\eqref{eq:canonical decomposition} for every nonzero element $q \in M$. Fix $q \in M^\bullet$. Let~$N$ be the submonoid of $M$ generated by the sequence $(q_n)_{n \ge 1}$; that is,
	\[
		N := \langle q_n \mid n \in \nn \rangle.
	\]
	It follows from \cite[Proposition~3.1]{GL23} that $M$ is an atomic Puiseux monoid with
	\[
		\mathcal{A}(M) = \Big\{ \frac{q_n}{p_n} \ \Big{|} \ n \in \nn \Big\}.
	\]
	
 	For the existence of the decomposition in~\eqref{eq:canonical decomposition}, we first decompose $q$ as in~\eqref{eq:canonical decomposition} without imposing the condition that $c_n < p_n$ for all $n \in \nn$. Since $M$ is atomic, there is at least one way to decompose $q$ in the specified way (with $n_q = 0$). Among all such decompositions, choose $q = n_q + \sum_{n \in \nn} c_n \frac{q_n}{p_n}$ to be one minimizing the sum $\sum_{n \in \nn} c_n$. We claim that in the chosen decomposition, $c_n < p_n$ for every $n \in \nn$. Observe that if there existed $k \in \nn$ such that $c_k \ge p_k$, then
	\[
		q = n'_q + (c_k - p_k) \frac{q_k}{p_k} + \! \! \sum_{n \in \nn \setminus \{k\}} \! \! c_n \frac{q_n}{p_n},
	\]
	where $n'_q := n_q + q_k \in N$, would be another decomposition with smaller corresponding sum, which is not possible given the minimality of $\sum_{n \in \nn} c_n$. Hence every element $q \in M$ has a decomposition as in~\eqref{eq:canonical decomposition} satisfying $c_n \in \ldb 0, p_n - 1 \rdb$ for every $n \in \nn$. 
	
	For the uniqueness, suppose that $q$ has a decomposition as in~\eqref{eq:canonical decomposition} and also a decomposition $q = n'_q + \sum_{n \in \nn} c'_n \frac{q_n}{p_n}$ satisfying $n'_q \in N$ and $c'_n \in \ldb 0, p_n - 1 \rdb$ for every $n \in \nn$ (with $c'_n = 0$ for all but finitely many $n \in \nn$). Observe that, for each $n \in \nn$, the $p_n$-adic valuation of each element of $N$ is nonnegative and the $p_n$-adic valuation of $\frac{q_k}{p_k}$ is also nonnegative when $k \neq n$. Thus, for each $n \in \nn$, after applying $p_n$-adic valuation to both sides of $n'_q - n_q = \sum_{n \in \nn} (c_n - c'_n) \frac{q_n}{p_n}$, we find that $p_n \mid c_n - c'_n$, which implies that $c'_n = c_n$ (here we are using the fact that $c_n, c'_n \in \ldb 0, p_n - 1 \rdb$). Therefore $c'_n = c_n$ for every $n \in \nn$, and so $n'_q = n_q$. As a consequence, we can conclude that the decomposition in~\eqref{eq:canonical decomposition} is unique.
\end{proof}

With notation as in the statement of Lemma~\ref{lem:canonical decomposition of atomized PMs}, we call the equality in~\eqref{eq:canonical decomposition} the \emph{canonical decomposition} of $q$. We are now in a position to argue the main result of this section. Our proof of the following theorem is motivated by the argument given in Example~\ref{ex:Betti elements of the reciprocal PM}.

\begin{theorem} \label{thm:atomization and Betti elements}
	Let $(q_n)_{n \ge 1}$ be a sequence consisting of positive rational numbers, and let $(p_n)_{n \ge 1}$ be a sequence of prime numbers whose terms are pairwise distinct such that $\gcd(p_i, \mathsf{n}(q_i)) = \gcd(p_i, \mathsf{d}(q_j)) = 1$ for all $i,j \in \nn$. Let $M$ be the Puiseux monoid of $(q_n)_{n \ge 1}$ atomized at $(p_n)_{n \ge 1}$. Then the following statements hold.
	\begin{enumerate}
		\item For each $j \in \mathbb{N}$, the length-$p_j$ factorization $p_j \frac{q_j}{p_j}$ is an isolated vertex in $\nabla_{q_j}$.
		
		\item $\emph{Betti}(M) \subseteq \langle q_n \mid n \in \nn \rangle$.
		\smallskip
		
		\item $\{q_n \mid n \in \nn\} \subseteq \emph{Betti}(M)$ if $\langle q_n \mid n \in \nn \rangle$ is antimatter.
		\smallskip
		
		\item $\emph{Betti}(M) \subseteq \{q_n \mid n \in \nn\}$ if $\langle q_n \mid n \in \nn \rangle$ is a valuation monoid.
	\end{enumerate}
\end{theorem}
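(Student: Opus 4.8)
The plan is to extract from Lemma~\ref{lem:canonical decomposition of atomized PMs} a single combinatorial fact and then read off all four parts from it. Writing the canonical decomposition of $q \in M$ as $q = n_q + \sum_{n} c_n \frac{q_n}{p_n}$, I would first show that if $z = \sum_{n} a_n \frac{q_n}{p_n}$ is \emph{any} factorization of $q$, then $a_n \equiv c_n \pmod{p_n}$ for every $n$. This follows straight from uniqueness: writing $a_n = p_n \lfloor a_n/p_n \rfloor + (a_n \bmod p_n)$ and using $p_n \frac{q_n}{p_n} = q_n \in N := \langle q_n \mid n \in \nn \rangle$ rewrites $z$ as a decomposition of the form~\eqref{eq:canonical decomposition}, which the lemma forces to be the canonical one, so $a_n \bmod p_n = c_n$. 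Thus the multiplicity of each atom in every factorization is pinned down modulo $p_n$, and everything below is a consequence.

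For (1) and (2): the factorization $p_j \frac{q_j}{p_j}$ of $q_j$ involves only the atom $\frac{q_j}{p_j}$, and in any other factorization of $q_j$ the multiplicity of $\frac{q_j}{p_j}$ is $\equiv 0 \pmod{p_j}$ (since $q_j \in N$ gives $c_j = 0$); so to share this atom a factorization would need at least $p_j$ copies of it, hence value at least $q_j$, and as the total value is $q_j$ with all atoms positive, such a factorization must equal $p_j \frac{q_j}{p_j}$ itself. Hence that vertex is isolated, giving (1). For (2), if $b \notin N$ then some canonical coefficient $c_k$ is nonzero, and since $0 < c_k < p_k$ the congruence forces the multiplicity of $\frac{q_k}{p_k}$ to be at least $c_k \ge 1$ in \emph{every} factorization of $b$; thus all vertices of $\nabla_b$ share the atom $\frac{q_k}{p_k}$, so $\nabla_b$ is complete and $b$ is not a Betti element.

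For (3) and (4) I would record the translation for $b \in N$: there all $c_n = 0$, so every factorization has $a_n \equiv 0 \pmod{p_n}$, i.e. $a_n = p_n \beta_n$ with $\beta_n \in \nn_0$, and $\sum_n p_n \beta_n \frac{q_n}{p_n} = \sum_n \beta_n q_n$. Hence factorizations of $b$ in $M$ correspond bijectively to representations $b = \sum_n \beta_n q_n$, and two factorizations are adjacent in $\nabla_b$ exactly when the corresponding representations use a common generator $q_n$. For (3), if $N$ is antimatter then $q_j$ is not an atom of $N$, so $q_j = x + y$ with $x,y \in N^\bullet$; expanding $x$ and $y$ as $\nn_0$-combinations of the $q_n$ yields a representation of $q_j$ of total generator-count at least two, which differs from the trivial representation $q_j = q_j$. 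This produces a second factorization of $q_j$, so together with the isolated vertex from (1) the graph $\nabla_{q_j}$ is disconnected and $q_j \in \text{Betti}(M)$.

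Part (4) is the crux. By (2) a Betti element lies in $N$, so it suffices to prove that if $N$ is a valuation monoid and $b \in N$ is \emph{not} one of the $q_n$, then $\nabla_b$ is connected. The first step is that for a valuation Puiseux monoid $\mid_N$ coincides with the usual order: if $x < y$ in $N$ then $y \mid_N x$ is impossible, so $x \mid_N y$, giving $y - x \in N$; consequently $N = G_{\ge 0}$ for the subgroup $G := N - N$ of $\qq$. The second step replaces $\nabla_b$ by the auxiliary graph $H$ on the generators $q_n < b$ with an edge $q_i q_j$ whenever $q_i + q_j \le b$ (these are exactly the pairs co-occurring in some representation, filling in $b - q_i - q_j \in N$); a short chaining argument, using that every factorization has nonempty support among the $q_n < b$, shows $\nabla_b$ is connected once $H$ is. The final step proves $H$ connected: given generators $g, g' < b$, I seek a generator $\delta$ with $\delta \le \min(b - g,\, b - g')$, for then $g \sim \delta \sim g'$ in $H$. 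If the generators have infimum $0$ (equivalently $G$ is dense) such a small $\delta$ exists; otherwise $G$ is cyclic, $N \cong \nn_0$ with smallest element $\iota$, and since $g, g' < b$ lie in $\iota\nn_0$ we get $b - g, b - g' \ge \iota$, so $\delta = \iota$ works. The main obstacle is precisely this last analysis: establishing that a valuation Puiseux monoid is the nonnegative part of a subgroup of $\qq$, and then handling the discrete versus dense regimes so that every representation of a non-generator can be chained to every other.
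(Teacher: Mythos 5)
Your proposal is correct. Parts (1)--(3) follow essentially the same route as the paper: your ``reduction mod $p_n$'' fact (that any factorization $\sum_n a_n \frac{q_n}{p_n}$ of $q$ has $a_n \equiv c_n \pmod{p_n}$, where the $c_n$ are the canonical coefficients) is a clean repackaging, via the uniqueness in Lemma~\ref{lem:canonical decomposition of atomized PMs}, of the $p_n$-adic valuation computations the paper performs inline, and the deductions you draw from it for (1), (2), and (3) match the paper's. Part (4) is where you genuinely diverge. The paper argues directly: given two factorizations $z, z'$ of $q \in N^\bullet \setminus \{q_n \mid n \in \nn\}$ with $c_k \ge p_k$ and $c'_\ell \ge p_\ell$, it invokes the valuation hypothesis once on the pair $(q_k, q_\ell)$, say $q_k = q_\ell + s$, and manufactures a single common neighbor $z'' = z - p_k\frac{q_k}{p_k} + p_\ell \frac{q_\ell}{p_\ell} + z_s$, yielding a length-$2$ path; no structure theory of $N$ is needed. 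You instead prove that a valuation Puiseux monoid is the nonnegative cone $G_{\ge 0}$ of a subgroup $G \le \qq$, pass to the co-occurrence graph $H$ on generators below $b$, and split into the dense and cyclic cases to find a small generator $\delta$ adjacent to everything. Your route is longer and requires the dense-versus-cyclic dichotomy (all steps of which check out, including that the minimal element $\iota$ is itself a generator in the cyclic case), but it buys a stronger structural conclusion -- connectivity of the generator co-occurrence graph -- whereas the paper's two-step path is the minimal argument needed for the stated inclusion.
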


\begin{proof}
	Set $N := \langle q_n \mid n \in \nn \rangle$. As mentioned in Lemma~\ref{lem:canonical decomposition of atomized PMs}, the Puiseux monoid $M$ is atomic with
	\[
		\mathcal{A}(M) = \Big\{ \frac{q_n}{p_n} \ \Big{|} \ n \in \nn \Big\}.
	\]
	
	(1) Fix $j \in \nn$, and let us argue that $z:= p_j \frac{q_j}{p_j}$ is an isolated factorization in the Betti graph of $q_j$. If $|\mathsf{Z}(q_j)| = 1$, then we are done. Suppose, on the other hand, that $|\mathsf{Z}(q_j)| \ge 2$, and take $c_1, \dots, c_k \in \nn_0$ such that $z' := \sum_{i=1}^k c_i \frac{q_i}{p_i}$ is a factorization of $q_j$ in $M$ with $z \neq z'$ (we can assume, without loss of generality, that $k \ge j$). Because $v_{p_j}(q_j) = 0$, we can apply the $p_j$-adic valuation to both sides of the equality $q_j = \sum_{i=1}^k c_i \frac{q_i}{p_i}$ to find that $p_j \mid c_j$. Thus, the fact that $z \neq z'$ ensures that $c_j = 0$. As a consequence, $\gcd(z,z') = 0$. We can conclude, therefore, that $z$ is an isolated factorization in the Betti graph $\nabla_{q_j}$.
	\smallskip
	
	(2) Fix $q \in M$. It suffices to prove that if $q \notin N$, then $q$ is not a Betti element. To do so, assume that $q \notin N$. In light of Lemma~\ref{lem:canonical decomposition of atomized PMs}, we can write $q$ uniquely as
	\[
		q = n_q + \sum_{n \in \nn} c_n \frac{q_n}{p_n},
	\]
	where $n_q \in N$ and $c_n \in \ldb 0, p_n - 1 \rdb$ for every $n \in \nn$ (here $c_n = 0$ for all but finitely many $n \in \nn$). Since $q \notin N$, there exists $k \in \nn$ such that $c_k \neq 0$. In this case, the $p_k$-adic valuation of $q$ is negative and, therefore, every factorization of $q$ must contain the atom $\frac{q_k}{p_k}$, whence $\nabla_q$ is connected. Hence $\text{Betti}(M) \subseteq N$.
	\smallskip
	
	(3) Assume that $N$ is an antimatter monoid. For any $j \in \mathbb{N}$, recall from part~(1) that $z:=p_j\frac{q_j}{p_j}$ is an isolated factorization in the Betti graph $\nabla_{q_j}$. Also, since $N$ is an antimatter monoid, there exists $k \in \mathbb{N}$ and $s \in N^\bullet$ such that $q_j = q_k + s$. Now set
	\[
		z' := p_k \frac{q_k}{p_k} + z'',
	\]
	where $z''$ is a factorization of $s$ in $M$. Since $k \neq j$, we see that $z'$ is a factorization of $q_j$ in $M$ that is different from $z$. Since $z$ is isolated, $\gcd(z,z') = 0$, and so $\nabla_{q_j}$ is disconnected. Hence $q_j$ is a Betti element of $M$. As a result, the inclusion $\{q_n \mid n \in \mathbb{N}\} \subseteq \text{Betti}(M)$ holds.
	\smallskip
	
	(4) Lastly, assume that $N$ is a valuation monoid. Fix $q \in M^\bullet \setminus \{q_n \mid n \in \mathbb{N}\}$, and let us argue that $q$ is not a Betti element of $M$. If $q \notin N$, then it follows from part~(2) that $q \notin \text{Betti}(M)$. Hence we assume that $q \in N$. 
	Fix two factorizations
	\[
		z := \sum_{n \in \mathbb{N}} c_n\frac{q_n}{p_n} \quad \text{ and } \quad z' := \sum_{n \in \mathbb{N}} c_n'\frac{q_n}{p_n}
	\]
	of $q$ (here all but finitely many $c_n$ and all but finitely many $c_n'$ equal $0$). For each $n \in \nn$, the fact that $q \in N$ implies that $q$ has nonnegative $p_n$-adic valuation, and so after applying the $p_n$-adic valuation to both equalities $q = \sum_{n \in \mathbb{N}} c_n\frac{q_n}{p_n}$ and $q= \sum_{n \in \mathbb{N}} c'_n\frac{q_n}{p_n}$, we find that $p_n \mid c_n$ and $p_n \mid c'_n$. Because $q$ is nonzero, we can take $k,\ell \in \mathbb{N}$ such that $c_k \geq p_k$ and $c'_\ell \geq p_\ell$. Since $N$ is a valuation monoid, either $q_k \mid_N q_\ell$ or $q_\ell \mid_N q_k$. Assume, without loss of generality, that $q_\ell \mid_N q_k$. Then there exists $s \in N$ such that $q_k = q_\ell + s$. Now take a factorization $z_s$ of $s$ in $M$, and set
	\[
		z'' := z - p_k\frac{q_k}{p_k} + p_\ell\frac{q_\ell}{p_\ell} + z_s.
	\]
	Notice that $z''$ is a factorization of $q$ in $M$. As $q \not \in \{q_n \mid n \in \mathbb{N}\}$, it follows that $\gcd(z, z'') \neq 0$.  Also, the atom $\frac{q_\ell}{p_\ell}$ has nonzero coefficients in both $z'$ and $z''$, which implies that $\gcd(z', z'') \neq 0$. As the factorizations $z$ and $z'$ are both adjacent to $z''$ in the Betti graph $\nabla_q$, there is a length-$2$ path between them. Since $z$ and $z'$ were arbitrarily taken, the graph $\nabla_q$ is connected, which means that $q$ is not a Betti element. Hence $\text{Betti}(M) \subseteq \{q_n \mid n \in \mathbb{N}\}$.
\end{proof}

As an immediate consequence of Theorem~\ref{thm:atomization and Betti elements}, we obtain the following corollary.

\begin{cor} \label{cor:Betti elements of atomized PM}
	Let $(q_n)_{n \ge 1}$ be a sequence consisting of positive rational numbers, and let $(p_n)_{n \ge 1}$ be a sequence of prime numbers whose terms are pairwise distinct such that $\gcd(p_i, \mathsf{n}(q_i)) = \gcd(p_i, \mathsf{d}(q_j)) = 1$ for all $i,j \in \nn$. Let $M$ be the Puiseux monoid of $(q_n)_{n \ge 1}$ atomized at $(p_n)_{n \ge 1}$. If $\langle q_n \mid n \in \nn \rangle$ is an antimatter valuation monoid, then
	\[
		\emph{Betti}(M) = \{q_n \mid n \in \nn\}.
	\]
\end{cor}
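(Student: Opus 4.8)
The plan is to obtain the claimed equality by combining the two containments furnished by parts~(3) and~(4) of Theorem~\ref{thm:atomization and Betti elements}. The key observation is that the hypothesis of the corollary asserts that $N := \langle q_n \mid n \in \nn \rangle$ is \emph{simultaneously} antimatter and a valuation monoid, so the antecedents of both conditional statements are in force at once. This is precisely the situation in which the lower bound from part~(3) and the upper bound from part~(4) can be applied to the same monoid $M$.

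Concretely, I would proceed in two short steps. First, since $N$ is antimatter, part~(3) of the theorem gives the inclusion $\{q_n \mid n \in \nn\} \subseteq \text{Betti}(M)$. Second, since $N$ is a valuation monoid, part~(4) of the theorem gives the reverse inclusion $\text{Betti}(M) \subseteq \{q_n \mid n \in \nn\}$. Putting these two containments together yields $\text{Betti}(M) = \{q_n \mid n \in \nn\}$, which is exactly the asserted identity. No further computation with the canonical decomposition of Lemma~\ref{lem:canonical decomposition of atomized PMs} is needed here, as all the technical work has already been absorbed into the proof of the theorem.

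Since the statement is flagged as an immediate consequence of Theorem~\ref{thm:atomization and Betti elements}, I do not expect any genuine obstacle. The only point deserving a moment of care is to confirm that the two hypotheses do not interfere with one another: being antimatter (so that $N$ has no atoms) and being a valuation monoid (so that divisibility in $N$ is total) are compatible conditions, and each is exactly the hypothesis invoked by its respective part of the theorem. One might also remark, for the reader's orientation, that antimatter valuation Puiseux monoids do occur in practice, so the corollary is not vacuous; but establishing that is a matter for the examples rather than for the proof of the equality itself.
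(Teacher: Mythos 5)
Your proposal is correct and matches the paper's intent exactly: the paper states this corollary as an immediate consequence of Theorem~\ref{thm:atomization and Betti elements}, obtained by combining the containment from part~(3) (using the antimatter hypothesis) with the reverse containment from part~(4) (using the valuation hypothesis). No further comment is needed.
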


As an application of Corollary~\ref{cor:Betti elements of atomized PM}, we can easily determine the set of Betti elements of the Grams' monoid.

\begin{example} \label{ex:Grams' monoid}
	Let $(p_n)_{n \ge 0}$ be the strictly increasing sequence whose underlying set consists of all odd primes, and consider the Puiseux monoid
	\[
		M := \Big\langle \frac{1}{2^n p_n} \ \Big{|} \ n \in \nn_0 \Big\rangle.
	\]
	The monoid $M$ is often referred to as the \emph{Grams' monoid} as it was the crucial ingredient in Grams' construction of the first atomic integral domain not satisfying the ACCP (see \cite{aG74} for the details of the construction). Observe that $M$ is the atomization of the sequence $\big( \frac{1}{2^n}\big)_{n \ge 0}$ at the sequence of primes $(p_n)_{n \ge 0}$. As a consequence, it follows from \cite[Proposition~3.1]{GL23} that $M$ is an atomic Puiseux monoid with
	\[
		\mathcal{A}(M) = \Big\{ \frac{1}{2^n p_n} \ \Big{|} \ n \in \nn_0 \Big\}.
	\]
	On the other hand, $M$ does not satisfy the ACCP because $\big( \frac{1}{2^n} + M \big)_{n \ge 0}$ is an ascending chain of principal ideals of $M$ that does not stabilize. Since $\big\langle \frac{1}{2^n} \mid n \in \nn_0 \big\rangle$ is an antimatter valuation monoid, it follows from Corollary~\ref{cor:Betti elements of atomized PM} that
	\[
		\text{Betti}(M) = \Big\{\frac{1}{2^n} \ \Big{|} \ n \in \nn_0 \Big\}.
	\]
	
\end{example}

As a final application of Theorem~\ref{thm:atomization and Betti elements}, we construct atomic Puiseux monoids with any prescribed number of Betti elements.

\begin{prop} \label{prop:sizes of Betti sets}
	For each $b \in \nn \cup \{\infty\}$, there exists an atomic Puiseux monoid $M$ such that $|\emph{Betti}(M)| = b$.
\end{prop}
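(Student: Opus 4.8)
The plan is to treat the finite cardinalities and the infinite cardinality separately, in each case realizing the prescribed number as the number of \emph{generator-values} of a suitable atomized Puiseux monoid and then reading off the Betti set directly from Theorem~\ref{thm:atomization and Betti elements}. For a finite target $b \in \nn$, I would let the sequence of generator-values cycle through $\{1, 2, \dots, b\}$, for instance $q_n := ((n-1) \bmod b) + 1$, so that each of $1, \dots, b$ occurs at infinitely many indices, and I would atomize at any sequence $(p_n)_{n \ge 1}$ of pairwise distinct primes all exceeding $b$. The gcd hypotheses required to form the atomization hold automatically here, since $\mathsf{n}(q_n) = q_n \le b < p_n$ and $\mathsf{d}(q_n) = 1$. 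The engine driving the upper bound is that $N := \langle q_n \mid n \in \nn \rangle = \langle 1, \dots, b \rangle = \nn_0$ is a valuation monoid, so Theorem~\ref{thm:atomization and Betti elements}(4) immediately yields $\text{Betti}(M) \subseteq \{q_n \mid n \in \nn\} = \{1, \dots, b\}$.

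For the reverse inclusion I would show that each value $v \in \{1, \dots, b\}$ is in fact a Betti element. Since $v$ occurs at two distinct indices $i \neq i'$, part~(1) of the same theorem provides two \emph{distinct} isolated vertices $p_i \frac{q_i}{p_i}$ and $p_{i'} \frac{q_{i'}}{p_{i'}}$ of $\nabla_v$ (these are genuinely different factorizations, as the atoms $\frac{v}{p_i}$ and $\frac{v}{p_{i'}}$ have different denominators), so $\nabla_v$ is disconnected and $v \in \text{Betti}(M)$. Combining the two inclusions gives $\text{Betti}(M) = \{1, \dots, b\}$, a set of exactly $b$ elements. For the remaining case $b = \infty$, I would simply invoke the Grams' monoid of Example~\ref{ex:Grams' monoid}: as $\big\langle \frac{1}{2^n} \mid n \in \nn_0 \big\rangle$ is an antimatter valuation monoid, Corollary~\ref{cor:Betti elements of atomized PM} gives $\text{Betti}(M) = \big\{ \frac{1}{2^n} \mid n \in \nn_0 \big\}$, which is countably infinite.

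The point requiring the most care, and the only place where the design of the construction matters, is the exact matching of the two inclusions in the finite case. Part~(4) controls $\text{Betti}(M)$ from above precisely because the chosen value monoid is $\nn_0$, a valuation monoid; this is what prevents any element outside $\{1, \dots, b\}$ from becoming a Betti element. Part~(1), together with the deliberate repetition of each value, controls it from below; the repetition is exactly what manufactures the second isolated vertex needed to disconnect $\nabla_v$. Neither ingredient can be dropped, so the crux is to keep both in play simultaneously. I do not expect a genuine obstacle beyond this, since Theorem~\ref{thm:atomization and Betti elements} carries the structural weight and the verifications of the gcd conditions, of the valuation property of $\nn_0$, and of the cardinality count are all routine.
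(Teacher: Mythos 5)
Your proposal is correct and follows essentially the same route as the paper: the identical cyclic construction $q_n = ((n-1)\bmod b)+1$ atomized at primes exceeding $b$, the upper bound $\mathrm{Betti}(M)\subseteq \ldb 1,b\rdb$ via part~(4) of Theorem~\ref{thm:atomization and Betti elements} applied to the valuation monoid $\nn_0$, and the lower bound via the isolated vertices supplied by part~(1), with the case $b=\infty$ handled by the Grams' monoid. The only cosmetic difference is that you disconnect $\nabla_v$ by exhibiting two distinct isolated vertices, whereas the paper exhibits one isolated vertex together with infinitely many vertices; both observations are immediate from the repetition of each value in the sequence.
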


\begin{proof}
	We have seen in Example~\ref{ex:Grams' monoid} that the Grams' monoid is an atomic Puiseux monoid, and we have also seen in the same example that the Grams' monoid has infinitely many Betti elements. Therefore it suffices to assume that $b \in \nn$.
	
	Fix $b \in \nn$. Now consider the sequence $(q_n)_{n \ge 1}$ whose terms are defined as $q_{kb + r} := r+1$ for every $k \in \nn_0$ and $r \in \ldb 0,b-1 \rdb$. Now let $(p_n)_{n \ge 1}$ be a strictly increasing sequence of primes such that $p_n > b$ for every $n \in \nn$. Then $\gcd(p_i, \mathsf{n}(q_i)) = \gcd(p_i, \mathsf{d}(q_j)) = 1$ for all $i,j \in \nn$. Let $M$ be the Puiseux monoid we obtain after atomizing the sequence $(q_n)_{\ge 1}$ at the sequence $(p_n)_{n \ge 1}$. It follows from \cite[Proposition~3.1]{GL23} that $M$ is an atomic Puiseux monoid with
	\[
		\mathcal{A}(M) := \Big\{ \frac{q_n}{p_n} \ \Big{|} \ n \in \nn \Big\}.
	\]
	Observe that $\langle q_n \mid n \in \nn \rangle = \langle 1, \dots, b \rangle = \nn_0$, which is a valuation monoid. As a consequence, it follows from part~(4) of Theorem~\ref{thm:atomization and Betti elements} that
	\[
		\text{Betti}(M) \subseteq \{q_n \mid n \in \nn\} = \ldb 1,b \rdb.
	\]
	Now fix $m \in \ldb 1,b \rdb$, and let us check that $m$ is a Betti element. To do this, first observe that the Betti graph $\nabla_m$ contains infinitely many vertices because
	\[
		\Big\{ p_{kb + (m-1)}\frac{m}{p_{kb + (m-1)}} \ \Big{|} \ k \in \nn \Big\} \subseteq \mathsf{Z}(m).
	\]
	Therefore $\nabla_m$ must be disconnected as it follows from part~(1) of Theorem~\ref{thm:atomization and Betti elements} that $p_{m-1} \frac{m}{p_{m-1}}$ is an isolated vertex in $\nabla_m$. Hence $\text{Betti}(M) = \ldb 1,b \rdb$, and so $|\text{Betti}(M)| = b$, as desired.
\end{proof}

Among the examples of atomic Puiseux monoids we have discussed so far, the only one having infinitely many Betti elements is the Grams' monoid, which does not satisfy the ACCP. However, there are Puiseux monoids containing infinitely many Betti elements that are FFMs. The following example illustrates this observation.

\begin{example} \label{ex:multiplicatively cyclic PM}
	Let $q$ be a non-integer positive rational, and consider the Puiseux monoid $M_q := \langle q^n \mid n \in \nn_0 \rangle$. It is well known that $M_q$ is atomic provided that $q^{-1} \notin \nn$, in which case, $\mathcal{A}(M_q) = \{q^n \mid n \in \nn_0\}$ (see \cite[Theorem~6.2]{GG18} and also \cite[Theorem~4.2]{CG22}). It follows from \cite[Lemma~4.3]{ABLST23} that 
	\[
		\text{Betti}(M_q) = \big\{ \mathsf{n}(q) q^n \mid n \in \nn_0 \big\}.
	\]
	Thus, $M_q$ is an atomic Puiseux monoid with infinitely many Betti elements. When $q>1$, it follows from~\cite[Theorem~5.6]{fG19} that $M_q$ is an FFM (in particular $M_q$ satisfies the ACCP).
\end{example}

As we have mentioned in Example~\ref{ex:Betti elements of f.g. PMs}, every finitely generated Puiseux monoid has finitely many Betti elements. Although the class of finitely generated monoids sits inside the class of FFMs (see \cite[Proposition~2.7.8]{GH06}), we have seen in Example~\ref{ex:multiplicatively cyclic PM} that inside the class of Puiseux monoids, the finite factorization property is not enough to guarantee that the set of Betti elements is finite.

On the other hand, every atomic Puiseux monoid with finitely many Betti elements we have discussed so far satisfies the ACCP: these include the Puiseux monoids discussed in Examples~\ref{ex:Betti elements of f.g. PMs} and~\ref{ex:Betti elements of the reciprocal PM} as well as the Puiseux monoids constructed in the proof of Proposition~\ref{prop:sizes of Betti sets}, which satisfy the ACCP in light of \cite[Theorem~4.5]{AGH21}. We have not been able to construct an atomic Puiseux monoid with finitely many Betti elements that does not satisfy the ACCP. Thus, we conclude this paper with the following question.

\begin{question}
	Does every atomic Puiseux monoid with finitely many Betti elements satisfy the ACCP?
\end{question}

\bigskip
\section*{Acknowledgments}

We thank our MIT PRIMES mentors Prof.\ Scott Chapman and Dr.\ Felix Gotti for introducing us to this area of research and for providing this research project  and valuable guidance during its completion. We also thank Dr.\ Harold Polo for helpful comments and feedback. Finally, we thank the MIT PRIMES program for creating this research opportunity in the first place.

\bigskip

\end{document}